\newcommand{\SL}{\mathrm{SL}}
\newcommand{\GL}{\mathrm{GL}}
\newcommand{\GF}{\mathrm{GF}}
\renewcommand{\mod}{\bmod}
\title[The Special Linear Group \& Block Unitriangular Matrices]{Representing the Special Linear Group with Block Unitriangular Matrices}
\author{John Urschel}
\address{\vspace{-.5mm}\newline Department of Mathematics, Massachusetts Institute of Technology, Cambridge, MA \newline Society of Fellows, Harvard University, Cambridge, MA}
\email{urschel@mit.edu}
\newtheorem{theorem}{Theorem}[section]
\newtheorem{lemma}[theorem]{Lemma}
\newtheorem{corollary}[theorem]{Corollary}
\begin{document}

\maketitle

\begin{abstract}
We prove that every element of the special linear group can be represented as the product of at most six block unitriangular matrices, and that there exist matrices for which six products are necessary, independent of indexing. We present an analogous result for the general linear group. These results serve as general statements regarding the representational power of alternating linear updates. The factorizations and lower bounds of this work immediately imply tight estimates on the expressive power of linear affine coupling blocks in machine learning.
\end{abstract}

\section*{Introduction}
Let $\mathbb{F}$ be an arbitrary field and  $\mathrm{M}_{m,n}(\mathbb{F})$ denote the set of $m \times n$ matrices with coefficients in $\mathbb{F}$. When $m =n$, we simply write $\mathrm{M}_{n}(\mathbb{F})$. Let $\mathrm{GL}_n(\mathbb{F})$ denote the group of $n \times n$ non-singular matrices with coefficients in $\mathbb{F}$, and $\mathrm{SL}_n(\mathbb{F})$ denote the subgroup of $\mathrm{GL}_n(\mathbb{F})$ consisting of matrices with determinant one. Let $I$ denote the identity matrix and $\mathrm{0}$ denote the zero matrix; the dimension of each is always clear from context. Let $\mathrm{S}_n$ denote the symmetric group and $P_\pi$ denote the permutation $\pi\in \mathrm{S}_n$ of the columns of $I$. For $X \subset \mathrm{M}_n(\mathbb{F})$, let $X^k \subset \mathrm{M}_n(\mathbb{F})$ denote the set of $k$-fold products of elements of $X$. Let $\mathrm{BL}_{m,n}(\mathbb{F})$ and $\mathrm{BU}_{m,n}(\mathbb{F})$ denote the subgroups of $\mathrm{SL}_{m+n}(\mathbb{F})$ consisting of block lower and upper unitriangular matrices, respectively, with block partition $\{1,...,m\}$ and $\{m+1,...,m+n\}$:
\begin{align*}
    \mathrm{BL}_{m,n}(\mathbb{F}) &= \bigg\{ \begin{bmatrix} I & 0 \\ A & I \end{bmatrix} \, \bigg| \, A \in \mathrm{M}_{n,m}(\mathbb{F}) \bigg\},\\
    \mathrm{BU}_{m,n}(\mathbb{F})&= \bigg\{ \begin{bmatrix} I & A \\ 0 & I \end{bmatrix} \, \bigg| \,A \in \mathrm{M}_{m,n}(\mathbb{F}) \bigg\}.
\end{align*}
In this work, we prove the existence of the following block unitriangular factorization of the special linear group.
\begin{theorem}\label{thm:sl}
For every $M \in \mathrm{SL}_{2n}(\mathbb{F})$, there exists $A_1,...,A_6 \in \mathrm{M}_n(\mathbb{F})$ such that
$$ M = \begin{bmatrix} I & 0 \\ A_1 & I \end{bmatrix} \begin{bmatrix} I & A_2 \\ 0 & I \end{bmatrix} \begin{bmatrix} I & 0 \\ A_3 & I \end{bmatrix} \begin{bmatrix} I & A_4 \\ 0 & I \end{bmatrix}\begin{bmatrix} I & 0 \\ A_5 & I \end{bmatrix} \begin{bmatrix} I & A_6 \\ 0 & I \end{bmatrix} .$$
For every $m+n>3$, there exists some $M \in \SL_{m+n}(\mathbb{F})$ such that $M \not \in \big[\mathrm{BL}_{m,n}(\mathbb{F}) \cup \mathrm{BU}_{m,n}(\mathbb{F})\big]^5$. Furthermore, if $\mathbb{F}$ has at least four elements, then the lower bound holds independent of indexing: there exists $M \in \SL_{m+n}(\mathbb{F})$ such that $P_{\pi} M P_{\pi^{-1}} \not \in \big[\mathrm{BL}_{m,n}(\mathbb{F}) \cup \mathrm{BU}_{m,n}(\mathbb{F})\big]^5$ for all permutations $\pi \in \mathrm{S}_{m+n}$.
\end{theorem}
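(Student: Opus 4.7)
The plan is to address the six-factor upper bound, the basic five-factor lower bound, and the permutation-robust lower bound in sequence.

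For the upper bound, I would start with the block LDU decomposition. If the top-left block $A$ of $M = \begin{bmatrix} A & B \\ C & D \end{bmatrix}$ is invertible, then $M = L(CA^{-1}) \cdot \mathrm{diag}(A, S) \cdot U(A^{-1}B)$ with Schur complement $S = D - CA^{-1}B$ and $\det A \cdot \det S = 1$. When $A$ is singular, the full row rank of $[A,B]$ --- forced by $M \in \SL_{2n}$ --- lets me find $X$ with $A + BX$ invertible, so I replace $M$ by $ML(-X)$ and absorb the extra $\mathrm{BL}$ factor at the end. The problem then reduces to factoring $\mathrm{diag}(A,S)$ as four alternating block unitriangulars, so that the outer $L$ and $U$ from LDU bring the total to six. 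A direct ansatz $\mathrm{diag}(A,S) = ULUL$ imposes $X_1 Y_1 = A - I$ and $Y_1 X_1 = S^{-1} - I$, forcing $A$ and $S^{-1}$ to share their nonzero spectra, which is not generally true. The main obstacle is to circumvent this compatibility: I expect the proof either extends the factorization by an auxiliary unitriangular pair that conjugates $S^{-1}$ into the similarity class of $A$, or solves the six-factor polynomial system $M = L_1 U_1 L_2 U_2 L_3 U_3$ directly via explicit formulas without going through LDU.

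For the basic lower bound, every product of five elements of $\mathrm{BL}\cup\mathrm{BU}$ lies, after merging adjacent factors of the same type, in one of the two alternating forms $\mathrm{BL}\,\mathrm{BU}\,\mathrm{BL}\,\mathrm{BU}\,\mathrm{BL}$ or $\mathrm{BU}\,\mathrm{BL}\,\mathrm{BU}\,\mathrm{BL}\,\mathrm{BU}$. Expanding either product symbolically yields polynomial identities tying together its four blocks; for the $LULUL$ form one computes $M_{11} = I + B_1 A_2 + M_{12} A_3$ and $M_{22} = I + A_1 M_{12} + A_2 B_2$, coupling the diagonal blocks to the off-diagonal blocks through the internal parameters in a constrained way. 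I would extract from these identities a concrete algebraic invariant --- a rank inequality, or a polynomial relation on characteristic polynomials of certain submatrices --- and exhibit an explicit $M \in \SL_{m+n}$ violating the invariant under both alternating patterns simultaneously. Dimension counting fails in the square-block case $m = n$ because $5mn \ge (m+n)^2 - 1$, so an algebraic rather than numerical obstruction is indispensable here.

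For the permutation-invariant lower bound under $|\mathbb{F}|\ge 4$, a finite counting argument applies. Each of the $(m+n)!$ conjugate sets $P_{\pi^{-1}}(\mathrm{BL}\cup\mathrm{BU})^5 P_\pi$, $\pi \in \mathrm{S}_{m+n}$, lies in a proper Zariski-closed subset of $\SL_{m+n}$ defined by the algebraic identities from the basic lower bound, and their finite union is still a proper closed subvariety. The key delicate step is to estimate $\mathbb{F}$-point counts on each forbidden subvariety sharply enough that combining with the factor $(m+n)!$ gives a strict inequality against $|\SL_{m+n}(\mathbb{F})|$ precisely when $|\mathbb{F}|\ge 4$. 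The small-field cases $|\mathbb{F}|\in\{2,3\}$ are expected to behave genuinely differently, which accounts for the specific cardinality threshold in the hypothesis.
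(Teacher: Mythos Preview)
Your proposal has genuine gaps in both the upper bound and the permutation-robust lower bound; the basic lower bound is pointed in the right direction but still vague.

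\textbf{Upper bound.} Your LDU reduction leaves you with the task of writing $\mathrm{diag}(A,S)$ as four alternating unitriangulars, and you correctly observe that the naive $ULUL$ ansatz forces $A-I$ and $S^{-1}-I$ to share their nonzero spectra, which is false in general. You then say you ``expect'' a workaround but do not supply one; neither of your two suggestions is an argument. The paper does something quite different. It makes the \emph{upper-right} block $M_2$ invertible (at the cost of one $\mathrm{BU}$ factor on the right, via the same rank argument you use for the upper-left block), and then writes any such $M$ directly as a product of five alternating factors. The key missing idea is Thompson's theorem that every element of $\SL_n(\mathbb{F})$ (with the sole exception of $\SL_2(\GF(2))$) is a commutator in $\GL_n(\mathbb{F})$: one chooses $X,Y\in\GL_n(\mathbb{F})$ with $[X,Y]=M_2(M_4M_2^{-1}M_1-M_3)$ and writes down explicit $A_1,\ldots,A_5$ in terms of $X,Y,M_1,M_2,M_4$. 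The exceptional case $\SL_4(\GF(2))$ is handled by exhaustive search. None of this is visible from the LDU viewpoint.

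\textbf{Basic lower bound.} Your outline (expand $LULUL$, extract an algebraic constraint, exhibit a violator) matches the paper's strategy, but you have not identified the constraint. The paper specializes to block-diagonal $M=\mathrm{diag}(M_1,M_4)$ and shows that any $LULUL$ representation forces $A_2(A_3M_1^{-1})=I-M_1^{-1}$ and $(A_3M_1^{-1})A_2=I-M_4$ (after normalizing the diagonal factors), hence $\mathrm{trace}(I-M_1^{-1})=\mathrm{trace}(I-M_4)$. An explicit block-diagonal matrix built from cyclic-permutation-like blocks violates this trace identity.

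\textbf{Permutation-robust lower bound.} Your Zariski/counting plan has a real problem: you assert that each set $P_{\pi^{-1}}(\mathrm{BL}\cup\mathrm{BU})^5 P_\pi$ lies in a \emph{proper} Zariski-closed subvariety of $\SL_{m+n}$, but you give no such variety. The constraints you derived in the previous step apply only to block-diagonal inputs, not to arbitrary $M$, so they do not cut out a hypersurface in $\SL_{m+n}$. In the square case $m=n$ the parameter count $5n^2\ge 4n^2-1$ rules out a dimension argument, and there is no reason to expect the image of the five-fold product map to sit inside a proper closed set at all. The paper avoids all of this by reusing the $XY$/$YX$ idea: take $M$ diagonal with entries $g,h,(gh)^{-1}$ and the rest equal to $1$, where $g,h\ne 1$ and $gh\ne 1$ (possible exactly when $|\mathbb{F}|\ge 4$). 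Any permutation conjugate is still diagonal, and the equations $A_2(A_3M_1^{-1})=I-M_1^{-1}$, $(A_3M_1^{-1})A_2=I-M_4$ force $I-M_1^{-1}$ and $I-M_4$ to have the same multiset of nonzero eigenvalues. Since exactly three diagonal entries of $M$ differ from $1$, no bipartition of the index set can achieve this, regardless of $\pi$. This is where the threshold $|\mathbb{F}|\ge 4$ actually comes from --- not from a point-count estimate.
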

%$M \not \in \{ T_1 T_2 T_3 T_4 T_5 \, | \, T_i \in \big[\mathrm{BL}_{2n}(\mathbb{F}) \cup \mathrm{BU}_{2n}(\mathbb{F})\big]^5\}
%When $\mathbb{F}$ has at least four elements, the lower bound holds independent of indexing; e.g., for all matrices permutation-similar to $M$.
%the lower bound holds for $P_{\pi} M P_{\pi^{-1}}$ for all permutations $\pi \in \mathrm{S}_n$, where $P_{\pi}$ is the permutation $\pi$ of the columns of $I$ (Proposition \ref{prop:sl_perm}). 
We have an analogous theorem for the general linear group. Let $\mathrm{T}_{m,n}(\mathbb{F})$ denote the set of matrices of the form $\begin{bsmallmatrix} B & 0 \\ A & C \end{bsmallmatrix}$ or $\begin{bsmallmatrix} B & A \\ 0 &  C \end{bsmallmatrix}$, with $B \in \mathrm{GL}_m(\mathbb{F})$ and $C \in \mathrm{GL}_n(\mathbb{F})$ both diagonal. We prove the following result.

\begin{theorem} \label{thm:gl}
For every $M \in \mathrm{GL}_{2n}(\mathbb{F})$ and diagonal $D \in \mathrm{GL}_{n}(\mathbb{F})$ with $\mathrm{det}(D) = \mathrm{det}(M)$, there exists $A_1,...,A_6 \in \mathrm{M}_n(\mathbb{F})$ such that 
$$ M = \begin{bmatrix} I & 0 \\ A_1 & I \end{bmatrix} \begin{bmatrix} I & A_2 \\ 0 & I \end{bmatrix} \begin{bmatrix} I & 0 \\ A_3 & I \end{bmatrix} \begin{bmatrix} I & A_4 \\ 0 & I \end{bmatrix}\begin{bmatrix} I & 0 \\ A_5 & I \end{bmatrix} \begin{bmatrix} D & A_6 \\ 0 & I \end{bmatrix}.$$
Furthermore, for every $m+n>3$, there exists $M \in \mathrm{SL}_{m+n}(\mathbb{F})$ such that $M \not \in \big[\mathrm{T}_{m+n}(\mathbb{F})\big]^5$.
\end{theorem}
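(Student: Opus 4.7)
The six-factor representation reduces immediately to Theorem~\ref{thm:sl}. Given $M \in \mathrm{GL}_{2n}(\mathbb{F})$ and diagonal $D$ with $\det(D) = \det(M)$, set $M' := M \begin{bsmallmatrix} D^{-1} & 0 \\ 0 & I \end{bsmallmatrix}$. Then $\det(M') = 1$, so Theorem~\ref{thm:sl} produces a factorization $M' = B_1 B_2 B_3 B_4 B_5 \begin{bsmallmatrix} I & A_6' \\ 0 & I \end{bsmallmatrix}$ with $B_1,\dots,B_5$ alternating block unitriangular. Right-multiplying by $\begin{bsmallmatrix} D & 0 \\ 0 & I \end{bsmallmatrix}$ and using the identity $\begin{bsmallmatrix} I & A_6' \\ 0 & I \end{bsmallmatrix} \begin{bsmallmatrix} D & 0 \\ 0 & I \end{bsmallmatrix} = \begin{bsmallmatrix} D & A_6' \\ 0 & I \end{bsmallmatrix}$ recovers $M$ in the required form with $A_6 = A_6'$.

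For the lower bound, the first step is to rewrite products from $\mathrm{T}_{m,n}(\mathbb{F})$ as a single diagonal matrix times a product of block unitriangular matrices. Every element of $\mathrm{T}_{m,n}(\mathbb{F})$ admits a factorization $T \cdot U$ with $T$ block-diagonal (having diagonal $B, C$ blocks) and $U \in \mathrm{BL}_{m,n}(\mathbb{F}) \cup \mathrm{BU}_{m,n}(\mathbb{F})$. Since conjugation by any diagonal matrix preserves both $\mathrm{BL}_{m,n}$ and $\mathrm{BU}_{m,n}$, the identity $U T = T (T^{-1} U T)$ lets us sweep all diagonal factors in a five-fold product to the left, yielding
\[
[\mathrm{T}_{m,n}(\mathbb{F})]^5 \;=\; \Delta \cdot [\mathrm{BL}_{m,n}(\mathbb{F}) \cup \mathrm{BU}_{m,n}(\mathbb{F})]^5,
\]
where $\Delta$ denotes the diagonal subgroup of $\mathrm{GL}_{m+n}(\mathbb{F})$. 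A matrix $M \in \mathrm{SL}_{m+n}(\mathbb{F})$ thus lies outside $[\mathrm{T}_{m,n}(\mathbb{F})]^5$ precisely when $DM \notin [\mathrm{BL}_{m,n}(\mathbb{F}) \cup \mathrm{BU}_{m,n}(\mathbb{F})]^5$ for every diagonal $D$ with $\det(D) = 1$.

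The principal obstacle is to exhibit such an $M$. I would take as a candidate the hard matrix produced in the proof of Theorem~\ref{thm:sl}'s lower bound, and seek to upgrade its obstruction so that it is invariant under left multiplication by an arbitrary diagonal matrix. If the obstruction certifying $M \notin [\mathrm{BL}_{m,n} \cup \mathrm{BU}_{m,n}]^5$ is phrased in terms of invariants unaffected by row rescaling — for instance vanishing of certain minors, ratios of entries within a row, or rank conditions on diagonal-normalized submatrices — then the same $M$ works for the $\mathrm{T}_{m,n}$ bound verbatim. Otherwise, a modest modification of the construction, chosen to be robust under the $(m+n-1)$-parameter diagonal action, should suffice, and the obstruction can then be checked directly on the entire $\Delta$-orbit rather than on a single representative.
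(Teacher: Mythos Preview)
Your six-factor argument is correct and is essentially the paper's: the paper likewise right-multiplies $M$ by a block containing $D^{-1}$ to land in $\mathrm{SL}_{2n}(\mathbb{F})$ and then invokes the five-layer decomposition (Lemmas~\ref{lm:sl_generic}/\ref{lm:sl2gf2_generic}), absorbing the correction into the last factor.

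Your lower-bound reduction $[\mathrm{T}_{m,n}(\mathbb{F})]^5 = \Delta \cdot [\mathrm{BL}_{m,n}(\mathbb{F}) \cup \mathrm{BU}_{m,n}(\mathbb{F})]^5$ is valid, and is in fact a clean repackaging of what the paper does implicitly by carrying the diagonal blocks $B_i,C_i$ through its calculation. The gap is that you stop at the reduction. You propose to take ``the hard matrix from Theorem~\ref{thm:sl}'' and ``seek to upgrade its obstruction'' to survive the diagonal action, but you neither name the matrix, nor the obstruction, nor carry out the verification; the suggested invariants (vanishing minors, row ratios, rank conditions) are not what actually works. In the paper the obstruction is a trace identity (Lemma~\ref{lm:lower}): if $\begin{bsmallmatrix} M_1 & 0 \\ 0 & M_4 \end{bsmallmatrix}\in[\mathrm{T}_{m,n}(\mathbb{F})]^5$ then $m\cdot 1+\mathrm{trace}(M_4\widetilde D)=n\cdot 1+\mathrm{trace}(M_1^{-1}D)$ for \emph{some} diagonal $D,\widetilde D$, coming from $\mathrm{trace}(PQ)=\mathrm{trace}(QP)$ applied to two expressions extracted from the block equations. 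The free $D,\widetilde D$ here are precisely your $\Delta$-factor, so your reduction does not bypass this computation. The paper then exhibits $\begin{bsmallmatrix} X^{-1}&0\\0&Y\end{bsmallmatrix}$ with $X$ a cyclic permutation (so $\mathrm{trace}(XD)=0$ for every diagonal $D$) and $Y$ engineered so that $\mathrm{trace}(Y\widetilde D)\ne n\cdot 1-m\cdot 1$ for every diagonal $\widetilde D$, with a separate short rank argument when $\min(m,n)=1$. This construction and its verification are the substance of the lower bound, and they are absent from your plan.
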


The factorizations of Theorems \ref{thm:sl} and \ref{thm:gl} are efficiently computable. Our construction is fairly unique among block matrix factorizations.\footnote{One somewhat similar construction is the $LU \tilde L$ factorization proposed by Serre and P{\"u}schel, where $L,\tilde L$ are block lower unitriangular and $U$ is block upper triangular; the authors studied how close (in a rank sense) such a factorization can be to block diagonal \cite{serre2016generalizing}.} It can be viewed as a generalized version of a block LU factorization. One benefit of our construction is that such a factorization always exists, whereas the existence of a block LU factorization relies on the invertibility of the upper left block. Moreover, the above theorems serve as general results regarding the representational power of alternating linear updates. 

In fact, Theorem \ref{thm:gl} immediately solves an open problem regarding affine coupling networks in machine learning. An affine coupling block is a function $f:\mathbb{R}^{m+n}\rightarrow \mathbb{R}^{m+n}$ of the form $f(x_u,x_v) = (x_u,x_v \odot \exp(s(x_u))+t(x_u)) $, where $x_u \in \mathbb{R}^m$, $x_v \in \mathbb{R}^{n}$ (typically, $m \approx n$), $s,t:\mathbb{R}^{m} \rightarrow \mathbb{R}^{n}$, and $\odot$ is the entry-wise product. A version of these functions (with $s = 0$) was originally introduced by Dinh, Krueger, and Y. Bengio in their NICE deep learning model \cite{dinh2014nice}. Dinh, Sohl-Dickstein, and S. Bengio expanded that work to real NVP (non-volume preserving) transformations (e.g., the general formulation above) \cite{dinh2016density}. These papers led, in part, to the popularization of normalizing flows in machine learning, a general class of diffeomorphisms that map some standard distribution (say, a standard Gaussian vector) to a more complex one. We refer the reader to \cite{kobyzev2020normalizing,papamakarios2021normalizing} for two excellent surveys of this emerging field. As discussed in \cite{papamakarios2021normalizing}, the ``question of foremost importance'' is the expressive power of such models, even when restricted to simple inputs. Recently, Koehler, Mehta, and Risteski studied the expressive power of linear affine couplings, i.e., matrices of the form $\begin{bsmallmatrix} I & 0 \\ A & D \end{bsmallmatrix}$ and $\begin{bsmallmatrix} D & A \\ 0 & I \end{bsmallmatrix}$ for $A \in \mathrm{M}_n(\mathbb{R})$ and diagonal $D \in \GL_n(\mathbb{R})$ with strictly positive entries \cite{koehler2021representational}. They posed the following question: how many linear affine coupling layers are needed to represent an arbitrary orientation-preserving matrix? They produced a $47$-layer construction and showed that at least five layers are necessary. Theorem \ref{thm:gl} answers their question exactly.

\begin{corollary}\label{cor:affine}
    Every matrix $M \in \GL_{2n}(\mathbb{R})$ with $\det(M)>0$ can be represented by a depth-six linear affine coupling network. In addition, for every $n>1$, there exists $M \in \SL_{2n}(\mathbb{R})$ for which six layers are necessary, i.e., $M$ cannot be exactly represented by a depth-five linear affine coupling network.
\end{corollary}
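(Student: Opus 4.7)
The plan is to obtain both halves of the corollary as immediate consequences of Theorem \ref{thm:gl}, with only small observations about positivity of diagonals in one direction and containment of matrix sets in the other. I do not expect a substantive obstacle, since the real work has been carried out in Theorem \ref{thm:gl}.

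For the upper bound, given $M \in \GL_{2n}(\mathbb{R})$ with $\det(M)>0$, I would select the diagonal matrix $D = \operatorname{diag}(\det(M),1,\ldots,1) \in \GL_n(\mathbb{R})$: because $\det(M)>0$, all diagonal entries of $D$ are strictly positive, and by construction $\det(D)=\det(M)$. Applying Theorem \ref{thm:gl} to this pair $(M,D)$ returns six factors. The first five have the form $\begin{bsmallmatrix} I & 0 \\ A_i & I \end{bsmallmatrix}$ or $\begin{bsmallmatrix} I & A_i \\ 0 & I \end{bsmallmatrix}$, each of which is a linear affine coupling block with diagonal part $I$ (trivially positive), and the sixth is $\begin{bsmallmatrix} D & A_6 \\ 0 & I \end{bsmallmatrix}$, a linear affine coupling block by our choice of $D$. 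Together they form the desired depth-six linear affine coupling representation of $M$.

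For the lower bound, the key observation is that every linear affine coupling block lies in $\mathrm{T}_{n,n}(\mathbb{R})$: such a block has the form $\begin{bsmallmatrix} I & 0 \\ A & D \end{bsmallmatrix}$ or $\begin{bsmallmatrix} D & A \\ 0 & I \end{bsmallmatrix}$ with both $I$ and $D$ diagonal, exactly matching the definition of $\mathrm{T}_{n,n}(\mathbb{R})$. Consequently, any product of five linear affine coupling blocks lies in $[\mathrm{T}_{n,n}(\mathbb{R})]^5$. Since $n>1$ gives $2n > 3$, the lower bound in Theorem \ref{thm:gl} produces some $M \in \SL_{2n}(\mathbb{R})$ with $M \notin [\mathrm{T}_{n,n}(\mathbb{R})]^5$, and any such $M$ is therefore not expressible as a depth-five linear affine coupling network.
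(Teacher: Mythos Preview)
Your proposal is correct and is precisely the argument the paper has in mind: the corollary is stated immediately after Theorem~\ref{thm:gl} as a direct consequence, with the upper bound coming from choosing a positive diagonal $D$ with $\det(D)=\det(M)$ and the lower bound from the containment of linear affine coupling blocks in $\mathrm{T}_{n,n}(\mathbb{R})$.
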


The lower bound of Corollary \ref{cor:affine} immediately implies one for the non-linear setting, by considering the function $M x$, and applying a Jacobian argument; see \cite[Corollary 6]{koehler2021representational} for details.
Theorem \ref{thm:sl} gives an analogous result for the aforementioned NICE model (where $D = I$), with an additional lower bound independent of indexing, e.g., a learned partition cannot do uniformly better than an arbitrary one.  The improvement in construction from depth $47$ to depth six leads to a significant practical difference in terms of architecture. For example, since permutation matrices can be represented with six layers, the choice of partition may be of limited importance. Furthermore, the improved construction has consequences for maximum likelihood estimation, as Corollary \ref{cor:affine} implies that the distributions representable as the application of a six-layer linear affine coupling network to $N(0,I)$ are exactly the set of $N(0,\Sigma)$ with $\Sigma$ invertible; see \cite[Appendix A.2]{koehler2021representational} for details.

\section*{Proof of Theorems \ref{thm:sl} and \ref{thm:gl}}\label{sec:sl}

We construct the factorizations of Theorems \ref{thm:sl} and \ref{thm:gl} first by showing that matrices with a non-singular upper-right block can be represented with five layers (Lemmas \ref{lm:sl_generic} \& \ref{lm:sl2gf2_generic}), and then by proving that it costs only one layer to convert any matrix into one with a non-singular upper-right block (Lemma \ref{lm:linear_algebra}). We prove matching lower bounds by analyzing the class of block diagonal (Lemma \ref{lm:lower}) and diagonal (Lemma \ref{lm:sl_perm}) matrices that can represented with five layers. 

In what follows, we make use of the theory of commutators, i.e., elements of a group $G$ of the form $[g,h]:=g^{-1}h^{-1}gh$ for some $g,h\in G$. We recall the following consequence of a combination of results of R.C. Thompson.

\begin{lemma}[\cite{thompson1960thesis,thompson1961commutators}]\label{lm:comm}
    If $\mathrm{SL}_n(\mathbb{F}) \ne \mathrm{SL}_2(\mathrm{GF}(2))$, then every element is a commutator of $\mathrm{GL}_n(\mathbb{F})$.
\end{lemma}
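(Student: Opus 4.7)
The plan is to adapt Thompson's strategy, which proceeds via rational canonical form and reduces the statement to the commutator representation of a single cyclic block. I would first observe that any $M \in \mathrm{SL}_n(\mathbb{F})$ is conjugate in $\mathrm{GL}_n(\mathbb{F})$ to its rational canonical form $C_{p_1} \oplus \cdots \oplus C_{p_k}$, a block diagonal sum of companion matrices of the invariant factors. Because the identities $[ugu^{-1}, uhu^{-1}] = u[g,h]u^{-1}$ and $[g_1 \oplus g_2,\, h_1 \oplus h_2] = [g_1,h_1] \oplus [g_2,h_2]$ hold, the property of being a commutator in $\mathrm{GL}_n(\mathbb{F})$ is both conjugation-invariant and closed under block direct sums. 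Hence the task reduces to writing each companion block $C_{p_i} \in \mathrm{GL}_{m_i}(\mathbb{F})$ as a commutator in $\mathrm{GL}_{m_i}(\mathbb{F})$, with no determinant constraint imposed on the building blocks $g_i, h_i$.

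For a single companion matrix $C \in \mathrm{GL}_m(\mathbb{F})$, the main trick is the equivalence $[g,h] = C$ if and only if $h^{-1} g h = g C$, that is, $g$ and $g C$ are conjugate in $\mathrm{GL}_m(\mathbb{F})$. Since conjugacy there is determined by rational canonical form, it suffices to exhibit $g \in \mathrm{GL}_m(\mathbb{F})$ for which $g$ and $gC$ are both cyclic with the same characteristic polynomial; any conjugator then supplies $h$. A natural choice is $g$ diagonal with carefully chosen distinct entries, the freedom in $\det(g)$ being exactly what one gains by working inside $\mathrm{GL}_m$ rather than $\mathrm{SL}_m$. This extra freedom is precisely what allows the $\mathrm{SL}_2(\mathrm{GF}(3))$ case to be handled here, even though in $\mathrm{SL}_2(\mathrm{GF}(3))$ itself not every element is a commutator of $\mathrm{SL}_2(\mathrm{GF}(3))$.

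The main obstacle, and where Thompson's argument requires the most care, is the combination of small field and small dimension. When $|\mathbb{F}|$ is small, there may not be enough distinct nonzero scalars to run the diagonal construction, and a handful of remaining conjugacy classes must be addressed by explicit matrix constructions, as carried out across \cite{thompson1960thesis,thompson1961commutators}. The single case where the conclusion genuinely fails is $\mathrm{SL}_2(\mathrm{GF}(2)) \cong \mathrm{S}_3$: there $\mathrm{GL}_2(\mathrm{GF}(2)) = \mathrm{SL}_2(\mathrm{GF}(2))$, and the commutator subgroup of $\mathrm{S}_3$ is the proper subgroup $\mathrm{A}_3$, so the elements corresponding to odd permutations cannot be written as commutators. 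This is the unique exception isolated in the statement.
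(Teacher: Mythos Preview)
The paper does not prove this lemma; it is quoted as a known consequence of Thompson's work and simply cited. So there is no ``paper's proof'' to match, and your sketch is an attempt to reconstruct Thompson's argument.

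There is, however, a genuine gap in your reduction. You pass from $M$ to its rational canonical form $C_{p_1}\oplus\cdots\oplus C_{p_k}$ and then assert that it suffices to write \emph{each} companion block $C_{p_i}$ as a commutator in $\mathrm{GL}_{m_i}(\mathbb{F})$. But any commutator in $\mathrm{GL}_{m_i}(\mathbb{F})$ has determinant $1$, while the individual invariant-factor blocks of an element of $\mathrm{SL}_n(\mathbb{F})$ need not: only the product of their determinants is $1$. For a concrete obstruction, take $M=\mathrm{diag}(2,2,1/4)\in\mathrm{SL}_3(\mathbb{Q})$; its invariant factors are $x-2$ and $(x-2)(x-1/4)$, giving a $1\times 1$ block $[2]$ and a $2\times 2$ block of determinant $1/2$, neither of which can be a commutator in its own $\mathrm{GL}_{m_i}$. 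Your own cyclic-block step confirms this: requiring $g$ and $gC$ to be conjugate forces $\det(g)=\det(gC)=\det(g)\det(C)$, hence $\det(C)=1$, so the construction you describe only applies to companion blocks that already lie in $\mathrm{SL}_{m_i}$.

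Thompson's actual argument does not decouple the blocks in this way. The single-cyclic-block case (with determinant one) is indeed the core, but the passage from a general $M\in\mathrm{SL}_n(\mathbb{F})$ to that case requires an additional device---roughly, pairing or regrouping blocks so as to manufacture determinant-one cyclic pieces, together with separate ad hoc constructions for the small-field/small-dimension corners. Your outline captures the flavor of the cyclic case and correctly isolates the $\mathrm{SL}_2(\mathrm{GF}(2))\cong\mathrm{S}_3$ exception, but the block-diagonal reduction as written does not go through.
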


Furthermore, given $A \in \mathrm{SL}_n(\mathbb{F}) \ne \mathrm{SL}_2(\mathrm{GF}(2))$, $X,Y \in \mathrm{GL}_n(\mathbb{F})$ satisfying $A = [X,Y]$ are efficiently computable; see \cite{thompson1960thesis,thompson1961commutators} for details. Using Lemma \ref{lm:comm} and well-chosen block unitriangular matrices, we produce a five-layer decomposition for matrices with a non-singular upper right block.

\begin{lemma}\label{lm:sl_generic}
Let $M =  \begin{bmatrix} M_1 & M_2\\ M_3 & M_4 \end{bmatrix} \in \mathrm{SL}_{2n}(\mathbb{F}) \ne \mathrm{SL}_{4}(\mathrm{GF}(2))$ and $M_2 \in \mathrm{GL}_n(\mathbb{F})$. Then
$$ M = \begin{bmatrix} I & 0 \\ A_1 & I \end{bmatrix} \begin{bmatrix} I & A_2 \\ 0 & I \end{bmatrix} \begin{bmatrix} I & 0 \\ A_3 & I \end{bmatrix} \begin{bmatrix} I & A_4 \\ 0 & I \end{bmatrix}\begin{bmatrix} I & 0 \\ A_5 & I \end{bmatrix} ,$$
where 
\begin{align*}
A_1 &= M_4 M_2^{-1} + M_2^{-1}X^{-1} Y^{-1}(I-X)-M_2^{-1}X^{-1}, \\
A_2 &= X M_2, \\
A_3 &= M_2^{-1} X^{-1}(Y-I),\\
A_4 &= Y^{-1}(I-X)M_2, \\
A_5 &= M_2^{-1} (M_1-Y),
\end{align*}
and $X,Y \in \mathrm{GL}_n(\mathbb{F})$ satisfy $[X,Y]= M_2(M_4M_2^{-1} M_1 - M_3)$.
\end{lemma}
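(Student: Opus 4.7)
The plan is to verify the claimed factorization by direct computation: expand the product of the five block unitriangular matrices on the right-hand side and check that each of the resulting four $n\times n$ blocks matches the corresponding block of $M$. Multiplying the rightmost four factors gives a $2\times 2$ block matrix whose entries are polynomial in $A_2, A_3, A_4, A_5$; left-multiplying by the remaining factor then simply adds $A_1$ times the top block-row to the bottom block-row. Equating the four resulting blocks to $M_1, M_2, M_3, M_4$ yields four matrix equations which I would solve in turn.

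The key algebraic observation is that the proposed formulas for $A_2$ and $A_3$ force $I + A_2 A_3 = Y$, and together with the formulas for $A_4, A_5$ yield $A_4 A_5 = Y^{-1}(I-X)(M_1 - Y)$. With these in hand, the $(1,2)$ block simplifies to $X M_2 + Y \cdot Y^{-1}(I-X) M_2 = M_2$; the $(1,1)$ block telescopes to $(I-X) M_1 + XY + X(M_1 - Y) = M_1$; and the $(2,2)$ equation $M_4 = A_1 M_2 + I + A_3 A_4$, after expanding $A_3 A_4 = M_2^{-1} X^{-1}(Y-I) Y^{-1}(I-X) M_2$, is equivalent to the stated formula for $A_1$. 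Thus three of the four block identities reduce to routine substitution once the two identities above are in hand.

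The main obstacle, and the only place Lemma \ref{lm:comm} enters, is the $(2,1)$ block equality $M_3 = A_1 M_1 + A_3(I + A_4 A_5) + A_5$. Expanding every term and applying the same substitutions produces extensive cancellation, leaving a residual of the form $M_4 M_2^{-1} M_1 - M_2^{-1} X^{-1} Y^{-1} X Y$. Recognising $X^{-1} Y^{-1} X Y = [X, Y]$ and substituting the commutator identity $[X, Y] = M_2(M_4 M_2^{-1} M_1 - M_3)$ collapses this to $M_3$. To ensure that such $X, Y \in \mathrm{GL}_n(\mathbb{F})$ exist, I would swap the two block-columns of $M$ and apply the standard Schur-complement determinant formula; using $M_2 \in \mathrm{GL}_n(\mathbb{F})$ and $\det M = 1$, this yields $\det\bigl(M_2(M_4 M_2^{-1} M_1 - M_3)\bigr) = 1$, so the commutator target lies in $\mathrm{SL}_n(\mathbb{F})$. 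The hypothesis $\mathrm{SL}_{2n}(\mathbb{F}) \ne \mathrm{SL}_4(\mathrm{GF}(2))$ rules out the sole exceptional case $n = 2$, $\mathbb{F} = \mathrm{GF}(2)$ for which Lemma \ref{lm:comm} could fail, and so the required $X$ and $Y$ exist.
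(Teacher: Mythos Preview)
Your proposal is correct and follows essentially the same strategy as the paper: a direct block-by-block verification of the product, together with the Schur-complement determinant identity $\det\bigl(M_2(M_4 M_2^{-1} M_1 - M_3)\bigr)=\det M=1$ and Thompson's commutator result (Lemma~\ref{lm:comm}) to guarantee the existence of $X,Y$. The only cosmetic difference is the order of grouping---the paper multiplies the first three and last two factors separately before combining, whereas you peel off $L_1$ last---but the algebra and the logical structure are the same.
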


\begin{proof}
 $\det \big[M_2(M_4M_2^{-1} M_1 - M_3) \big] = \det M$ \cite[Sec. 0.8.5]{horn2012matrix}, and so, by Lemma \ref{lm:comm}, there exists $X,Y \in \mathrm{GL}_n(\mathbb{F})$ with $[X,Y]= M_2(M_4M_2^{-1} M_1 - M_3)$. The result follows from a short computation: 
 \begin{align*} \begin{bmatrix} I & 0 \\ A_1 & I \end{bmatrix} \begin{bmatrix} I & A_2 \\ 0 & I \end{bmatrix} \begin{bmatrix} I & 0 \\ A_3 & I \end{bmatrix} &=\begin{bmatrix}  Y &  XM_2 \\ {\scriptstyle M_4 M_2^{-1}Y  - M_2^{-1} [X,Y]  }& {\scriptstyle (M_4 M_2^{-1} X + M_2^{-1}[X,Y]Y^{-1}(I-X))M_2} \end{bmatrix} , \\
\begin{bmatrix} I & A_4 \\ 0 & I \end{bmatrix}\begin{bmatrix} I & 0 \\ A_5 & I \end{bmatrix} &=\begin{bmatrix}Y^{-1}(M_1 -X(M_1 - Y)) & Y^{-1}(I-X) M_2 \\ M_2^{-1}(M_1-Y) & I
    \end{bmatrix},
    \end{align*}
    and, given $[X,Y]= M_2(M_4M_2^{-1} M_1 - M_3)$, their product equals $M$.
\end{proof}

Though the proof of Lemma \ref{lm:sl_generic} is quite short, it gives little motivation for the choice of matrices $A_1,...,A_5$. This exact choice can only be justified by a detailed analysis of both the representational power of three layers and the image of any matrix under a two-layer transformation. Unfortunately, $\mathrm{SL}_{4}(\mathrm{GF}(2))$ cannot be treated using Lemma \ref{lm:sl_generic}, as the matrices $\begin{bsmallmatrix} 1 & 1 \\ 0 & 1 \end{bsmallmatrix}$, $\begin{bsmallmatrix} 1 & 0 \\ 1 & 1 \end{bsmallmatrix}$, $\begin{bsmallmatrix} 0 & 1 \\ 1 & 0 \end{bsmallmatrix}$ are not commutators of $\mathrm{SL}_{2}(\mathrm{GF}(2))$. Despite this, elements of $\mathrm{SL}_{4}(\mathrm{GF}(2))$ with non-singular upper right block can still be represented as the product of five block unitriangular matrices, which, given the small group size, is easily verified by exhaustive search.\footnote{See repository \cite{ourrepo} for a short computer-assisted proof (using the Julia programming language \cite{bezanson2017julia}); the program terminates in under a second on a personal computer. It is also possible to prove Lemma \ref{lm:sl2gf2_generic} via an involved case analysis. The details are left to the interested reader.}

\begin{lemma}[\cite{ourrepo}]\label{lm:sl2gf2_generic}
Let $M =  \begin{bsmallmatrix} M_1 & M_2\\ M_3 & M_4 \end{bsmallmatrix} \in \SL_{4}(\GF(2))$ and $M_2 \in \SL_2(\GF(2))$. Then there exists $A_1,...,A_5 \in \mathrm{M}_2(\GF(2))$ such that
 $M = \begin{bsmallmatrix} I & 0 \\ A_1 & I \end{bsmallmatrix} \begin{bsmallmatrix} I & A_2 \\ 0 & I \end{bsmallmatrix} \begin{bsmallmatrix} I & 0 \\ A_3 & I \end{bsmallmatrix} \begin{bsmallmatrix} I & A_4 \\ 0 & I \end{bsmallmatrix}\begin{bsmallmatrix} I & 0 \\ A_5 & I \end{bsmallmatrix}.$
\end{lemma}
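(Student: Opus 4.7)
The plan is a computer-assisted proof by exhaustive enumeration, as suggested in the footnote. The key observation is that the entire setup is finite and small: the ambient group $\SL_4(\GF(2))$ has order $20160$, the set of candidate $5$-tuples $(A_1,\ldots,A_5) \in \mathrm{M}_2(\GF(2))^5$ has cardinality $16^5 = 2^{20}$, and the target subset $\mathcal{T} \subset \SL_4(\GF(2))$ consisting of matrices with invertible upper-right $2 \times 2$ block is strictly smaller than the ambient group. Hence the lemma reduces to a finite verification well within range of a few seconds of computation.

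Concretely, I would first enumerate $\mathcal{T}$ by iterating over all $M \in \SL_4(\GF(2))$ and retaining those with $M_2 \in \GL_2(\GF(2)) = \SL_2(\GF(2))$. Then, for each target $M \in \mathcal{T}$, I would search for the factorization by iterating over triples $(A_1, A_2, A_3) \in \mathrm{M}_2(\GF(2))^3$: once these are fixed, left-multiplying by the inverses of the first three block unitriangular factors yields a specific matrix $N$ that must equal $\begin{bsmallmatrix} I & A_4 \\ 0 & I \end{bsmallmatrix} \begin{bsmallmatrix} I & 0 \\ A_5 & I \end{bsmallmatrix} = \begin{bsmallmatrix} I + A_4 A_5 & A_4 \\ A_5 & I \end{bsmallmatrix}$. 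The existence of $(A_4, A_5)$ is then a constant-time check (the lower-right block of $N$ must be $I$, after which $A_5$ is read off as the lower-left block and $A_4$ as the upper-right block, and one verifies consistency with the upper-left block), reducing the per-target cost to at most $16^3 = 4096$ trials.

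The main ``obstacle'' here is not mathematical but one of trust: one must be confident in the correctness of a short program. Given the modest scale, the computation can be independently reproduced and audited line by line, and the enumeration terminates essentially instantly. An alternative purely-human proof by case analysis on the three non-commutator elements of $\SL_2(\GF(2))$ noted after Lemma \ref{lm:sl_generic} is possible in principle: for each target whose associated ``commutator data'' $M_2(M_4 M_2^{-1} M_1 - M_3)$ fails to be a commutator, one would exhibit an ad-hoc perturbation of the construction from Lemma \ref{lm:sl_generic}. However, this approach is considerably more tedious and no more enlightening than the enumeration, which is why I would favor the computer-assisted route.
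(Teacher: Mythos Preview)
Your proposal is correct and matches the paper's own approach: the paper also proves this lemma by a short computer-assisted exhaustive search over $\SL_4(\GF(2))$, noting that it terminates in under a second. Your reduction of the search by fixing $(A_1,A_2,A_3)$ and solving for $(A_4,A_5)$ is a sensible implementation detail, but the underlying strategy is the same.
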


The desired factorizations of Theorems \ref{thm:sl} and \ref{thm:gl} follow from the application of Lemmas \ref{lm:sl_generic} and \ref{lm:sl2gf2_generic} to the product $M \begin{bsmallmatrix} B & A \\ 0 & I\end{bsmallmatrix}$ for some diagonal $B \in \GL_n(\mathbb{F})$ satisfying $\det(B) = \det (M)^{-1}$ and $A \in \mathrm{M}_n(\mathbb{F})$ satisfying $M_1 A + M_2 \in \GL_n(\mathbb{F})$. That such a matrix $A$ exists is a consequence of the following simple lemma, as $M\in \GL_{2n}(\mathbb{F})$ implies $\mathrm{coker}(M_1) \cap \mathrm{coker}(M_2) = 0$.

\begin{lemma}\label{lm:linear_algebra}
For any $A,B \in  \mathrm{M}_n(\mathbb{F})$, there exists $C \in \mathrm{M}_n(\mathbb{F})$ such that $CA + B \in \mathrm{GL}_n(\mathbb{F})$ if and only if $\mathrm{ker}(A) \cap \mathrm{ker}(B)=0$.
\end{lemma}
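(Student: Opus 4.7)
The forward implication is immediate: any $v \in \mathrm{ker}(A) \cap \mathrm{ker}(B)$ satisfies $(CA + B)v = 0$ for every choice of $C \in \mathrm{M}_n(\mathbb{F})$, so a nontrivial intersection obstructs invertibility of $CA + B$.

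For the converse, my plan is an explicit basis-adapted construction valid over any field (in particular, avoiding any ``generic $C$ works'' argument, which would fail over finite fields). First I would set $r := \mathrm{rank}(A)$ and pick an ordered basis $v_1, \ldots, v_n$ of $\mathbb{F}^n$ whose last $n - r$ vectors span $\mathrm{ker}(A)$. Then $Av_1, \ldots, Av_r$ are linearly independent in $\mathbb{F}^n$, and the hypothesis $\mathrm{ker}(A) \cap \mathrm{ker}(B) = 0$ forces $B$ to be injective on $\mathrm{ker}(A)$, so $Bv_{r+1}, \ldots, Bv_n$ are linearly independent as well.

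I would then design $C$ so that the vectors $(CA + B)v_1, \ldots, (CA + B)v_n$ form a basis of $\mathbb{F}^n$, which is equivalent to invertibility of $CA + B$. For $i > r$ the output is forced to be $Bv_i$, contributing $n - r$ independent vectors. I would extend those to a full basis of $\mathbb{F}^n$ by adjoining any complementary vectors $w_1, \ldots, w_r$, and then define $C$ on the linearly independent set $\{Av_1, \ldots, Av_r\}$ by $C(Av_i) := w_i - Bv_i$, so that $(CA + B)v_i = w_i$ for $i \le r$. Extending $C$ arbitrarily to a linear map on all of $\mathbb{F}^n$ (for instance, by setting it to zero on any chosen complement of $\mathrm{im}(A)$) yields a matrix $C \in \mathrm{M}_n(\mathbb{F})$ with $CA + B$ invertible.

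I do not foresee any real obstacle; the only subtlety is to avoid an implicit infinite-field argument, and the basis-extension construction above bypasses that while making the role of the hypothesis $\mathrm{ker}(A) \cap \mathrm{ker}(B) = 0$ completely transparent (it is used exactly once, to guarantee that the $Bv_i$ for $i > r$ are independent).
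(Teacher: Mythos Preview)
Your proof is correct. Both implications are handled cleanly, and the explicit basis construction works over any field.

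Your route differs from the paper's, though. The paper organizes the converse around $\ker(B)$ rather than $\ker(A)$: it chooses $C$ so that $\mathrm{im}(C)$ is a complement of $\mathrm{im}(B)$ and $\ker(C)\cap A(\ker(B))=0$, then argues that $CA+B$ has trivial kernel (if $(CA+B)v=0$ then $CAv=-Bv$ lies in $\mathrm{im}(C)\cap\mathrm{im}(B)=0$, forcing $v\in\ker(B)$ and $Av\in\ker(C)$, whence $v\in\ker(A)\cap\ker(B)$). You instead work from $\ker(A)$, prescribing directly where $CA+B$ should send each basis vector and checking the result is a basis. The paper's argument is a bit slicker and coordinate-free; yours is more explicitly constructive and makes the single use of the hypothesis more visible. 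Either is perfectly adequate for this lemma.
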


\begin{proof}
    $\mathrm{ker}(A) \cap \mathrm{ker}(B)=0$ is clearly necessary, as $\mathrm{ker}(A) \cap \mathrm{ker}(B) \subset \mathrm{ker}(CA+B)$. The converse also follows quickly. Simply choose $C$ to be any matrix for which $\mathrm{im}(C)$ is a complement of $\mathrm{im}(B)$ and $\mathrm{ker}(C) \cap \{ A x \, | \, x \in \mathrm{ker}(B) \} = 0$. Such a matrix always exists, as $\mathrm{ker}(A) \cap \mathrm{ker}(B)=0$ and the rank-nullity theorem together imply $\dim(\{ A x \, | \, x \in \mathrm{ker}(B) \}) = \dim(\mathrm{im}(C)).$
\end{proof}

We now consider the lower bounds of Theorems \ref{thm:sl} and \ref{thm:gl}. We have the following lemma regarding the representation of block diagonal matrices.

\begin{lemma}\label{lm:lower}
If $\begin{bsmallmatrix} M_1 & 0 \\ 0 & M_4 \end{bsmallmatrix} \in \big[\mathrm{T}_{m,n}(\mathbb{F})\big]^5$, $M_1\in \GL_m(\mathbb{F})$, $M_4 \in \GL_n(\mathbb{F})$, then there exist diagonal matrices $D \in \GL_m(\mathbb{F})$ and $\widetilde D \in \GL_n(\mathbb{F})$ such that
$$m \cdot 1 + \mathrm{trace}(M_4 \widetilde D) = n \cdot 1 + \mathrm{trace}(M_1^{-1} D).$$
% $M_1^{-1} D$ and $M_4 \widetilde D$ have the same characteristic polynomial. Furthermore, if  $\begin{bsmallmatrix} M_1 & 0 \\ 0 & M_4 \end{bsmallmatrix} \in \big[\mathrm{BL}_{2n}(\mathbb{F}) \cup \mathrm{BU}_{2n}(\mathbb{F})\big]^5$, then $M_1^{-1}$ and $M_4$ have the same characteristic polynomial. 
\end{lemma}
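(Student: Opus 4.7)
The plan is to reduce the given factorization to an alternating pattern with a single trailing block diagonal factor, carry out an explicit block multiplication, and extract the trace identity from cyclicity of trace.

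Since the product of two L-type (respectively, U-type) elements of $\mathrm{T}_{m,n}(\mathbb{F})$ is again L-type (respectively, U-type), we may merge consecutive same-type factors and, if necessary, pad with identity matrices (which are simultaneously L-type and U-type) to arrange the five-term factorization in alternating form, yielding either an LULUL or a ULULU pattern. Furthermore, every $T \in \mathrm{T}_{m,n}(\mathbb{F})$ splits as a block unitriangular matrix (in $\mathrm{BL}_{m,n}(\mathbb{F})$ or $\mathrm{BU}_{m,n}(\mathbb{F})$) times a block diagonal matrix with diagonal blocks, and since conjugation by such a block diagonal preserves both $\mathrm{BL}_{m,n}(\mathbb{F})$ and $\mathrm{BU}_{m,n}(\mathbb{F})$, all five diagonal parts can be iteratively slid to the right end. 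In the LULUL case this produces $M = \hat L_1 \hat U_2 \hat L_3 \hat U_4 \hat L_5 \cdot E$, with $\hat L_i \in \mathrm{BL}_{m,n}(\mathbb{F})$, $\hat U_j \in \mathrm{BU}_{m,n}(\mathbb{F})$, and $E = \begin{bsmallmatrix} E_B & 0 \\ 0 & E_C \end{bsmallmatrix}$, where $E_B \in \GL_m(\mathbb{F})$ and $E_C \in \GL_n(\mathbb{F})$ are diagonal. Setting $N_1 := M_1 E_B^{-1}$ and $N_4 := M_4 E_C^{-1}$, the relation $ME^{-1} = \begin{bsmallmatrix} N_1 & 0 \\ 0 & N_4 \end{bsmallmatrix}$ becomes $\hat L_1 \hat U_2 \hat L_3 \hat U_4 \hat L_5 = \begin{bsmallmatrix} N_1 & 0 \\ 0 & N_4 \end{bsmallmatrix}$.

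Writing $\hat L_i = \begin{bsmallmatrix} I & 0 \\ A_i & I \end{bsmallmatrix}$ for $i \in \{1,3,5\}$ and $\hat U_j = \begin{bsmallmatrix} I & B_j \\ 0 & I \end{bsmallmatrix}$ for $j \in \{2,4\}$, a direct block expansion and matching against $\begin{bsmallmatrix} N_1 & 0 \\ 0 & N_4 \end{bsmallmatrix}$ extract, from the upper-left, upper-right, and lower-right block equations respectively, the identities $B_2 A_3 = N_1 - I$, $B_4 = -N_1^{-1} B_2$, and $A_3 B_4 = N_4 - I$ (the lower-left block equation is then solvable for $A_1$ and $A_5$ and plays no further role). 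Introducing the change of variable $X := N_1^{-1} B_2$ consolidates these into the clean pair $X A_3 = I_m - N_1^{-1}$ and $A_3 X = I_n - N_4$. Cyclicity of trace, $\mathrm{trace}(X A_3) = \mathrm{trace}(A_3 X)$, then yields $m - \mathrm{trace}(N_1^{-1}) = n - \mathrm{trace}(N_4)$. Setting $D := E_B$ and $\widetilde D := E_C^{-1}$, and using $\mathrm{trace}(N_1^{-1}) = \mathrm{trace}(E_B M_1^{-1}) = \mathrm{trace}(M_1^{-1} D)$ together with $\mathrm{trace}(N_4) = \mathrm{trace}(M_4 \widetilde D)$, delivers the desired identity.

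The ULULU case reduces to the LULUL case by transposition: a ULULU factorization of $M$ transposes to a LULUL factorization of $M^T = \begin{bsmallmatrix} M_1^T & 0 \\ 0 & M_4^T \end{bsmallmatrix}$, since transposition exchanges L-type and U-type within $\mathrm{T}_{m,n}(\mathbb{F})$ while preserving diagonality of the diagonal blocks. The LULUL identity derived for $M^T$ transfers back to $M$ because trace is invariant under transpose and the diagonal witnesses $D, \widetilde D$ are symmetric. The main technical obstacle is the block expansion itself, which appears daunting but collapses to the clean pair $X A_3 = I_m - N_1^{-1}$, $A_3 X = I_n - N_4$ the moment one guesses the substitution $X = N_1^{-1} B_2$, converting a seemingly nonlinear system into a one-line cyclicity argument.
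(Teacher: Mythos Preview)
Your proof is correct and follows essentially the same route as the paper: both reduce to a pair of identities $XY=I_m-(\text{term in }M_1^{-1})$ and $YX=I_n-(\text{term in }M_4)$ and then invoke cyclicity of trace. The only difference is cosmetic: the paper keeps the diagonal factors $B_i,C_i$ inside the block computation throughout, whereas you first slide all diagonals to the right and work with pure unitriangular factors, which makes the intermediate algebra cleaner (your $B_2A_3=N_1-I$, $A_3B_4=N_4-I$ versus the paper's $A_2(A_3B_4B_5M_1^{-1}B_1)=I-B_2B_3B_4B_5M_1^{-1}B_1$, etc.). The transpose reduction for the ULULU case is handled identically.
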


\begin{proof}
It suffices to consider an $LULUL$ factorization, as the lemma statement is invariant under transpose. Suppose $$\begin{bmatrix} M_1 & 0 \\ 0 & M_4 \end{bmatrix}  = \begin{bmatrix} B_1 & 0 \\ A_1 & C_1 \end{bmatrix} \begin{bmatrix} B_2 & A_2 \\ 0 & C_2 \end{bmatrix} \begin{bmatrix} B_3 & 0 \\ A_3 & C_3 \end{bmatrix} \begin{bmatrix} B_4 & A_4 \\ 0 & C_4 \end{bmatrix}\begin{bmatrix} B_5 & 0 \\ A_5 & C_5 \end{bmatrix}$$ for some $A_1, A_3, A_5 \in \mathrm{M}_{n,m}(\mathbb{F})$, $A_2,A_4 \in \mathrm{M}_{m,n}(\mathbb{F})$, and diagonal matrices $B_1, ..., B_5 \in \GL_{m}(\mathbb{F})$ and $C_1, ..., C_5 \in \mathrm{GL}_n(\mathbb{F})$. We have
$$\begin{bmatrix} B_1 & 0 \\ A_1 & C_1 \end{bmatrix} \begin{bmatrix} B_2 & A_2 \\ 0 & C_2 \end{bmatrix} \begin{bmatrix} B_3 & 0 \\ A_3 & C_3 \end{bmatrix} =\begin{bmatrix} B_1 B_2 B_3 + B_1 A_2 A_3 & B_1 A_2 C_3 \\ A_1B_2 B_3 + C_1 C_2 A_3 + A_1 A_2 A_3 & C_1 C_2 C_3 +A_1 A_2C_3\end{bmatrix}$$
and
\begin{align*}
\begin{bmatrix}
    M_1 & 0 \\ 0 & M_4
\end{bmatrix}\begin{bmatrix} B_5 & 0 \\ A_5 & C_5 \end{bmatrix}^{-1} \begin{bmatrix} B_4 & A_4 \\ 0 & C_4 \end{bmatrix}^{-1} & =\begin{bsmallmatrix}
    M_1^{} & 0 \\ 0 & M_4^{}
\end{bsmallmatrix} \begin{bsmallmatrix} B_5^{-1} & 0 \\ - C_5^{-1} A_5 B_5^{-1} & C_5^{-1} \end{bsmallmatrix} \begin{bsmallmatrix} B_4^{-1} & -B_4^{-1} A_4 C_4^{-1} \\ 0 & C_4^{-1} \end{bsmallmatrix} \\
&=\begin{bsmallmatrix}
    M_1B_5^{-1} B_4^{-1} & -M_1B_5^{-1} B_4^{-1} A_4 C_4^{-1} \\ -M_4 C_5^{-1} A_5 B_5^{-1} B_4^{-1} & M_4C_5^{-1}(I+A_5 B_5^{-1} B_4^{-1} A_4)C_4^{-1}
\end{bsmallmatrix}.
\end{align*}
By setting these matrices equal and inspecting the upper left and right blocks, we deduce that
 $A_2 A_3 = B_1^{-1} M_1 B_5^{-1} B_4^{-1} - B_2 B_3$ and $A_4 C_4^{-1} = -B_4 B_5 M_1^{-1} B_1 A_2 C_3$. Using the former equality applied to the lower left block, 
$$-M_4 C_5^{-1} A_5 B_5^{-1} B_4^{-1} = A_1 B_1^{-1}M_1 B_5^{-1} B_4^{-1} + C_1 C_2 A_3,$$
which, all together, implies (using the lower right block)
    \begin{align*}
    M_4 C_5^{-1}C_4^{-1} &=(C_1 C_2 + A_1 A_2) C_3 - M_4 C_5^{-1} A_5 B_5^{-1} B_4^{-1} A_4 C_4^{-1} \\
    &=(C_1 C_2 + A_1 A_2) C_3 + (A_1 B_1^{-1}M_1 B_5^{-1} B_4^{-1}  + C_1 C_2 A_3)(-B_4 B_5 M_1^{-1} B_1 A_2 C_3) \\
    &= C_1 C_2 C_3 -C_1 C_2 A_3 B_4 B_5 M_1^{-1} B_1 A_2 C_3.
\end{align*}
Therefore,
$$A_2(A_3 B_4 B_5 M_1^{-1}B_1) = I - B_2 B_3 B_4 B_5 M_1^{-1} B_1$$
and
$$ (A_3 B_4 B_5 M_1^{-1}B_1)A_2 = I - C_2^{-1} C_1^{-1} M_4 C_5^{-1} C_4^{-1} C_3^{-1}.$$
Taking the trace of each gives our desired result, as the product of two matrices has a fixed trace, independent of the order of operands.
\end{proof}

Consider the matrices $X \in \GL_m(\mathbb{F})$ and $Y \in \GL_n(\mathbb{F})$, $m,n>1$, defined as follows: \vspace{1 mm}
% \begin{align*}
%     X(i,j) &= \begin{cases} \; 1 & \text{if} \; j - i = 1 \mod m \\ \; 0 & \text{otherwise}\end{cases}, \\
%     Y(i,j) &= \begin{cases} 
%      \; \delta(m\cdot 1 - n \cdot 1)& \text{if} \; i=j=1 \\
%      \; (-1)^{m+n} & \text{if} \; i=n,j=1  \\
%           \; 1 & \text{if} \; j - i = 1  \\   
%      \; 0  &\text{otherwise}\end{cases},
% \end{align*}
$$X(i,j) = \begin{cases} \; 1 & \text{if} \; j - i = 1 \mod m \\ \; 0 & \text{otherwise}\end{cases}\,, \qquad
    Y(i,j) = \begin{cases} 
     \; \delta(m\cdot 1 - n \cdot 1)& \text{if} \; i=j=1 \\
     \; (-1)^{m+n} & \text{if} \; i=n,j=1  \\
          \; 1 & \text{if} \; j - i = 1  \\   
     \; 0  &\text{otherwise}\end{cases}\,,\vspace{1 mm}$$
     where $\delta(\cdot)$ is the Kronecker delta function. We have $\det(X) = \det(Y) = (-1)^{m+1}$, $\mathrm{trace}(X D) = 0$ for all diagonal $D \in \GL_m(\mathbb{F})$, and, for every diagonal $\widetilde D \in \GL_n(\mathbb{F})$, $\mathrm{trace}(Y \widetilde D) \ne 0$ if and only if $m\cdot 1 = n \cdot 1$. Therefore, by Lemma \ref{lm:lower}, $\begin{bsmallmatrix} X^{-1} & 0 \\ 0 & Y \end{bsmallmatrix} \not\in \big[\mathrm{T}_{m,n}(\mathbb{F})\big]^5$. To complete our desired lower bound, we must briefly analyze the case when either $m$ or $n$ is equal to one. If, say, $n = 1$ and $m>2$, let us keep $X$ as above and set $Y = (-1)^{m+1}$, so that $\begin{bsmallmatrix} X^{-1} & 0 \\ 0 & Y \end{bsmallmatrix} \in \SL_{m+1}(\mathbb{F})$. By the analysis in the proof of Lemma \ref{lm:lower}, if $\begin{bsmallmatrix} X^{-1} & 0 \\ 0 & Y \end{bsmallmatrix} \in \big[\mathrm{T}_{m,n}(\mathbb{F})\big]^5$, then $I - \widehat D X D$ is a rank one matrix for some diagonal $\widehat D, D \in \GL_{m}(\mathbb{F})$. However, this is not possible, as $[I - \widehat D X D](1,1) =[I - \widehat D X D](2,2) =  1$ and $[I - \widehat D X D](2,1) = 0$. This completes the proof of Theorem \ref{thm:gl}.
% For $n>1$, let $M_1^{-1}(i,j) = 1$ for $j -i = 1 \mod n$ and $M_1^{-1}(i,j) = 0$ otherwise, and $M_4(1,1) = 1$ and $M_4(i,j) = M_1^{-1}(i,j)$ otherwise. We have $\det(M_1) = \det(M_4) = (-1)^{n+1}$, and so $M \in \SL_{2n}(\mathbb{F})$. In addition,  $\mathrm{trace}( M_1^{-1} D) = 0$ and $\mathrm{trace}( M_4 \widetilde D) \ne 0$ for all diagonal $D, \widetilde D \in \GL_n(\mathbb{F})$. Therefore, by Lemma \ref{lm:lower}, $\begin{bsmallmatrix} M_1 & 0 \\ 0 & M_4 \end{bsmallmatrix} \not\in \big[\mathrm{T}_{2n}(\mathbb{F})\big]^5$. This completes the proof of Theorem \ref{thm:gl}. 
When $\mathbb{F}$ has at least four elements, the lower bound for $\SL_n(\mathbb{F})$ holds independent of indexing. The following lemma completes the proof of Theorem \ref{thm:sl}.

\begin{lemma}\label{lm:sl_perm}
If $\mathbb{F}$ has at least four elements, then, for every $m+n>3$, there exists $M \in \mathrm{SL}_{m+n}(\mathbb{F})$ such that $P_{\pi} M P_{\pi^{-1}} \not \in \big[\mathrm{BL}_{m,n}(\mathbb{F}) \cup \mathrm{BU}_{m,n}(\mathbb{F})\big]^5$ for all permutations $\pi \in \mathrm{S}_{m+n}$.
\end{lemma}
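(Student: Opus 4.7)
My plan is to reduce to the case of diagonal matrices, since any diagonal matrix remains block diagonal with respect to the partition $(m, n)$ after conjugation by any permutation. Concretely, if $M = \mathrm{diag}(d_1, \ldots, d_{m+n}) \in \SL_{m+n}(\mathbb{F})$, then $P_\pi M P_{\pi^{-1}}$ is the diagonal matrix with entries $d_{\pi^{-1}(1)}, \ldots, d_{\pi^{-1}(m+n)}$. As $\pi$ ranges over $\mathrm{S}_{m+n}$, the upper-left block of this conjugate takes the form $\mathrm{diag}\bigl((d_i)_{i \in S}\bigr)$ for an arbitrary $m$-subset $S \subset \{1, \ldots, m+n\}$, with the lower-right block being $\mathrm{diag}\bigl((d_i)_{i \in T}\bigr)$ for $T = \{1, \ldots, m+n\} \setminus S$. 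So it suffices to find a diagonal $M \in \SL_{m+n}(\mathbb{F})$ such that no $m$-subset $S$ yields a block diagonal matrix lying in $[\mathrm{BL}_{m,n}(\mathbb{F}) \cup \mathrm{BU}_{m,n}(\mathbb{F})]^5$.

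To extract an obstruction, I would specialize Lemma \ref{lm:lower} to this setting. Since every element of $\mathrm{BL}_{m,n}(\mathbb{F}) \cup \mathrm{BU}_{m,n}(\mathbb{F})$ has identity diagonal blocks, tracing through the proof of that lemma forces the auxiliary diagonals $D$ and $\widetilde D$ to equal the identity, and the trace identity reduces to $\mathrm{tr}(M_4) - \mathrm{tr}(M_1^{-1}) = n - m$. For a diagonal block pair this becomes the single equation
\[ \sum_{i \in T} d_i - \sum_{i \in S} d_i^{-1} = n - m, \]
so the problem reduces to exhibiting $d_1, \ldots, d_{m+n} \in \mathbb{F}^*$ with $\prod_i d_i = 1$ for which this equation fails for every $m$-subset $S$.

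For the construction, I would exploit the hypothesis $|\mathbb{F}| \ge 4$ to pick an element $\alpha \in \mathbb{F}^*$ of multiplicative order $k \ge 3$, place $k$ copies of $\alpha$ on the diagonal, and fill the remainder with $1$'s; the determinant constraint is then automatic from $\alpha^k = 1$. Parametrizing by the number $r$ of $\alpha$'s lying in $S$, the partition equation collapses to the univariate condition $(k-r)\alpha - r\alpha^{-1} + (2r - k) = 0$, which must be nonzero for every feasible $r \in \{\max(0, k-n), \ldots, \min(k, m)\}$. As a representative instance, in $\GF(4)$ one can take $k = 3$ and $\alpha = \omega$ a primitive cube root of unity; using $\omega^2 + \omega + 1 = 0$ and $2 = 0$, the expression reduces to $(1+r)\omega + r\omega^2 + 1$, which is directly checked to be nonzero for every $r \in \{0, 1, 2, 3\}$.

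The main obstacle is verifying this final construction uniformly across all field sizes and dimension pairs. Fields in which $|\mathbb{F}^*|$ is not divisible by $3$ (for example $\GF(5)$, $\GF(8)$, $\GF(9)$) have no primitive cube root of unity, so the single-parameter construction with $k = 3$ is unavailable; handling these cases requires a different multiplicative order $k$ (with a corresponding restriction on the feasible range of $r$) or a multi-parameter twist such as $d_1 = \alpha$, $d_2 = \beta$, $d_3 = (\alpha\beta)^{-1}$, together with a short case analysis over the $\binom{m+n}{m}$ partition identities. The threshold $|\mathbb{F}| \ge 4$ is sharp: in $\GF(2)$ and $\GF(3)$ there are simply too few invertible diagonal entries to evade every partition identity simultaneously.
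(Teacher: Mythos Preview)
Your reduction to diagonal $M$ and your specialization of the Lemma~\ref{lm:lower} computation to the unitriangular case are exactly right, and you even write down the construction that the paper actually uses (your ``multi-parameter twist'' $d_1=\alpha$, $d_2=\beta$, $d_3=(\alpha\beta)^{-1}$, the rest equal to $1$).  The gap is that you extract only the \emph{trace} identity from that computation.  In the unitriangular case the two relations read $A_2(A_3 M_1^{-1}) = I - M_1^{-1}$ and $(A_3 M_1^{-1})A_2 = I - M_4$, so $I-M_1^{-1}$ and $I-M_4$ are of the form $PQ$ and $QP$ and therefore share the same multiset of \emph{nonzero eigenvalues}, not merely the same trace.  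With the three-entry construction $g,h,(gh)^{-1}$ (any $g,h\ne 1$ with $gh\ne 1$, which exist whenever $|\mathbb{F}|\ge 4$), exactly three diagonal entries differ from $1$, so $I-M_1^{-1}$ and $I-M_4$ together carry exactly three nonzero eigenvalues; two equal multisets cannot partition an odd total, and the contradiction is immediate for every partition $S$, with no case analysis and no dependence on the characteristic.

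By contrast, the trace-only route genuinely fails.  Over $\GF(5)$, for instance, your $k$-copies-of-$\alpha$ construction cannot succeed: with $\alpha=2$ (order $4$) the expression $(4-r)\alpha - r\alpha^{-1}+2r-4$ vanishes at $r=3$, and with $\alpha=3$ it vanishes at $r=1$; and for the three-entry construction one checks that \emph{every} admissible pair $g,h\in\GF(5)$ satisfies either $g+h+(gh)^{-1}=3$ or $g^{-1}+h^{-1}+gh=3$, so one of the extreme partitions always passes the trace test.  The missing idea, then, is to upgrade from trace to the full nonzero spectrum of $PQ$ versus $QP$; once you do that, your fallback construction finishes the proof in one line.
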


\begin{proof}
Let $M$ be diagonal, with diagonal elements $g$, $h$, $(gh)^{-1}$ (not necessarily distinct), and $2n-3$ copies of $1$, for some $g,h \ne 1$ satisfying $gh \ne 1$. Such $g,h \in \mathbb{F}$ always exists when $\mathbb{F}$ has at least four elements (take any $g_1 ,g_2 \ne 0,1$ distinct; either $g_1^2 \ne 1$ or $g_1g_2 \ne 1$). Now suppose
$$M=\begin{bmatrix} M_1 & 0 \\ 0 & M_4 \end{bmatrix}  = \begin{bmatrix} I & 0 \\ A_1 & I \end{bmatrix} \begin{bmatrix} I & A_2 \\ 0 & I \end{bmatrix} \begin{bmatrix} I & 0 \\ A_3 & I \end{bmatrix} \begin{bmatrix} I & A_4 \\ 0 & I \end{bmatrix}\begin{bmatrix} I & 0 \\ A_5 & I \end{bmatrix}$$ for some $A_1, A_3, A_5 \in \mathrm{M}_{n,m}(\mathbb{F})$ and $A_2,A_4 \in \mathrm{M}_{m,n}(\mathbb{F})$. Repeating the same analysis as in the proof of Lemma \ref{lm:lower}, we find that 
$$A_2 (A_3 M_1^{-1})= I - M_1^{-1} \quad \text{ and } \quad (A_3 M_1^{-1} )A_2 = I - M_4.$$
The product of two matrices has a fixed set of non-zero characteristic roots, independent of the order of operands \cite[Theorem 1]{flanders1951elementary}. However, in total, exactly three elements of $I - M_1^{-1}$ and $I - M_4$ are non-zero. Therefore, there is no ordering and bipartition of the diagonal elements such that the non-zero characteristic roots, taken with multiplicity, of $I - M_1^{-1}$ and $I - M_4$ are the same, a contradiction. 
\end{proof}

\subsection*{Acknowledgements} The author thanks Louisa Thomas for improving the style of presentation.

{ 
	\bibliographystyle{plain}
	\bibliography{main} }

\end{document}